\theoremstyle{plain}
\newtheorem{thm}{Theorem}[section]
\newtheorem{lem}[thm]{Lemma}
\newtheorem{prop}[thm]{Proposition}
\theoremstyle{definition}
\newtheorem{defn}[thm]{Definition}
\theoremstyle{remark}
\newtheorem{rem}[thm]{Remark}
\newtheorem{exmpl}{Example}[section]
\newcommand{\abs}[1]{\left\vert#1\right\vert}
\newcommand{\set}[1]{\left\{#1\right\}}
\newcommand{\To}{\longrightarrow}
\newcommand{\X}{\mathfrak{X}}
\newcommand{\R}{\mathcal{R}}
\newcommand{\Z}{\mathbb{Z}}
\newcommand{\F}{\mathcal{F}}
\newcommand{\RR}{\mathbf{R}}
\newcommand{\C}{\mathbb{C}}
\newcommand{\CC}{\mathcal{C}}
\newcommand{\CVR}{\mathcal{C}=(V,\mathcal{R})}
\begin{document}
\title[]{\textit{Semisimplicity of cellular algebras over positive characteristic fields}}%
\author{Reza Sharafdini}
\address{Department of Mathematics, Collage of science, Pusan National University}%
\email{sharafdini@pusan.ac.kr}%

\subjclass{}%
\keywords{Cellular algebra, Coherent configuration, Semisimplicity, Frame number, Prime characteristic}%
\dedicatory{}%
\maketitle
\begin{abstract}
 In this paper, we investigate semisimplicity of cellular
algebras over positive characteristic fields. Our main result
shows that the Frame number of cellular algebras characterizes semisimplicity of it. In a sense, this is a generalization of
Maschke's theorem.
\end{abstract}
\section{intrudoction}
Cellular algebras are an object in algebraic combinatorics which were introduced by B. Yu. Weisfeiler and
A. A. Lehman as cellular algebras and independently by D. G. Higman as coherent algebras (see \cite{Higman1987} and \cite{Weisfeiler1976}).
 They are by definition matrix algebras over a ring which is closed under the Hadamard multiplication
and the transpose and containing the identity matrix and the all one matrix.
Note that according to E. Bannai and T. Ito \cite{Bannai1999}, a homogeneous coherent configuration is also called an association scheme (not necessarily commutative).
Clearly, the adjacency algebra of a coherent configuration (or scheme) is a cellular algebra. Conversely, for each cellular algebra $W$  there exists a coherent configuration whose adjacency algebra coincides with $W$. So we prefer to deal with the adjacency algebra
of a coherent configuration. In a sense, cellular algebras are generalization of group algebras, so it is natural to extend Maschke's theorem (see \cite{Maschke1898} and \cite[Theorem III.1.22]{Nagao}) to them. Also E. Bannai and T. Ito in \cite[page 303]{Bannai1986},
asked about determination by the parameters, association schemes and fields
for which the adjacency algebras are semisimple, symmetric, Frobenius and quasi-Frobenius. We will answer this question about semisimplicity,
for general case, cellular algebras. In order to do this, we use the Frame number of cellular algebras. This number which was introduced  by J. S. Frame in 1941, is in relation with the
double cosets of finite groups. In 1976  D. G. Higman  extended this number to cellular algebras. Z. Arad in 1999  with  the help of Frame number characterized semisimplicity of
commutative cellular algebras (or commutative association schemes) over fields of prime order (see \cite{Arad1999}).
 Finally, A. Hanaki  in 2002 generalized the result by Z. Arad for homogeneous
cellular algebras (or association schemes\footnote{After P. Delsart a commutative scheme is called an association scheme, however the latter term introduced by
K. R. Nair originally is referred to a symmetric scheme.}) over  positive characteristic fields (see \cite{Hanaki2000}).\\
In this paper, we consider cellular algebras, not necessarily homogeneous,
over positive characteristic fields. Actually we  prove that a cellular algebra over a field $k$  is semisimple
if and only if its Frame number  is not divided
by characteristic of $k$.
\section{Definition and Notation}
To make this paper self-contained we put in this section the notations and definitions concerning  cellular algebras. For  more details, we refer to \cite{Inp2004}.
\begin{defn}
Let $V$ be a finite set and $\R$ a set of nonempty binary
relations on $V$. A pair $\CC = (V,\R)$ is called a \textit{coherent
configuration} or a \textit{scheme} on $V$ if the following conditions are
satisfied:
\begin{itemize}
\item[(C1)] $\R$ forms a partition of the set $V^2$.
\item[(C2)] the diagonal $\Delta(V)$ of $V^2$ is a union of  elements of $\R$.
\item[(C3)] for every $R\in\R$,\quad$R^t:=\set{(v,u):(u,v)\in R}\in \R$.
\item[(C4)] for every $R,S,T\in\R$, the number $|\{v\in V:\ (u,v)\in R,\ (v,w)\in S\}|$ does not depend on the
choice of $(u,w)\in T$ and  is denoted by $c_{R,S}^{T}$.
\end{itemize}
\end{defn}
\noindent The elements of $V$, the relations of $\R = \R(\CC)$ and the
numbers from condition (C4) are called the \textit{points}, the
\textit{basis relations} and the \textit{intersection numbers} of $\CC$,
resp. The numbers $deg(\CC)=|V|$ and $rk(\CC)=|\R|$ are
called the \textit{degree} of $\CC$ and the \textit{rank} of $\CC$, resp. Also $\R^\ast(\CC)$ is defined as the set of all \textit{relations} of $\CC$ each of which is a union of elements
of $\R(\CC)$.
\begin{exmpl}Let $G\leq Sym(V)$ be a permutation group and $\R=Orb_2(G)$  be the set of orbitals of $G$.
Then $\R$ forms a partition of the set $V^2$ such that
$R^t$  belongs to $\R$ for all $R\in
\R$. Moreover, the reflexive relation $\Delta(V)$ is a union of elements of $\R$. Finally, given
$(u, v)\in V^2$ and $R, S\in \R$, if we set $$p_{u,v}(R, S)=\set {v\in V
:(u, v)\in R,(v,w)\in S},$$ then obviously
$p_{u^g,v^g}(R^g,S^g)=p_{u^g,v^g} (R,S)$ for all $g\in G$. So the
number $|p_{u,v}(R, S)|$ does not depend on the choice of the pair
$(u,v)\in T$ for all $T\in \R$. Thus Inv$(G):=(V,\R)$ is a scheme which is called \textit{Schurian}.
Also Inv$({id_V})$ is called  the \textit{trivial scheme}(see \cite{Inp2004}).
\end{exmpl}
{\textbf{Adjacency algebra.} Let $\CC = (V,\R)$ be a scheme and
$\Z$ be the ring of integers. Given a relation $R\in \R$.
Denote by $A(R)$ the \emph{adjacency matrix} of $R$: $A(R)$ is
a $\{0,1\}$-matrix of the full matrix algebra $\text{Mat}_V(\Z)$ such that $A(R)_{u,v} = 1$ iff $(u, v)\in R$. Then from the
definition of $\CC$ it follows that the
$\Z$-linear span $W = W(\CC)$ of the set $\set{A(R) : R\in \R}$ in $Mat_V(\Z)$ satisfies the following conditions:\\
\begin{itemize}
    \item[(C$^{'}$1)]for every $R, S \in \R$,\quad\quad$A(R)A(S)=\displaystyle\sum_{T\in\R}c_{R,S}^{T}A(T)$,
    \item[(C$^{'}$2)]  $I_V , J_V \in W$,
    \item[(C$^{'}$3)]  $W$ is closed under the Hadamard (componentwise) multiplication,
    \item[(C$^{'}$4)]  $W$ is closed under the transpose map,
\end{itemize}
where $I_V$ is the identity matrix and $J_V$ is the matrix
whose  all entries are ones. In particular, $W$ is a ring with
respect to the both multiplications with identities $I_V$ and
$J_V$, resp. It is called the \textit{adjacency ring} of
the scheme $\CC$.
If $\mathfrak{R}$ is a ring, then we define
$W^\mathfrak{R}:=\mathfrak{R}\bigotimes_{\Z}W$, the \textit{adjacency algebra} of $W$ over $\mathfrak{R}$. If $\Z$ is a subring of $\mathfrak{R}$,
then $W$ is a subalgebra of $W^\mathfrak{R}$. We call $\set{A(R) : R\in \R}$  the \textit{standard basis} of $W(\CC)$.\\
\begin{defn}An $\mathfrak{R}-$subalgebra $W$ of the algebra $\text{Mat}_V(\mathfrak{R})$ is called a \textit{cellular algebra} on $V$ if it
satisfies conditions (C$^{'}$2)-(C$^{'}$4).
\end{defn}
\begin{exmpl}The adjacency ring of the trivial scheme on $V$ coincides with $\text{Mat}_V(\Z)$ and
the adjacency ring of a scheme of rank $2$ has a standard basis
$\{I_V , J_V -I_V \}$. Given a group $G$ and $g\in G$ the mapping $x\mapsto xg$ is a permutation of $G$ and denoted by $g_{right}$.
The set $G_{right}=\set{g_{right}|g\in G}$ is a permutation group on $G$. Analogously, the group $G_{left}$ consists of
permutations $x\mapsto g^{-1}x, g\in G$. Let $\CC=$Inv$(G_{left})$ for a group $G$.
Each basis relation of $\CC$ is of the form $R_g = \{(x, xg) :x\in
G\}$ for some $g\in G$. We observe that $A(R_g)=P_g$ where
$P_g$ is the permutation matrix corresponding to $g_{right}$. Since obviously $P_gP_h = P_{gh}$ for
all $g, h\in G$, the mapping
$$\Z[G]\To W(\CC)$$$$g\mapsto P_g$$
induces an algebra isomorphism from the group ring $\Z[G]$ of the
group $G$ to the cellular ring $W(\CC)$ of the scheme $\CC$.
\end{exmpl}
\begin{rem}It is  well known that there exists a one-to-one correspondence between the set of  all cellular algebras on $V$ and
 the set of all schemes on $V$.
Due to this correspondence, we can use freely both the language of
matrices and the language of relations. In particular,
a scheme $\CC$ is commutative, if so is the adjacency algebra of
$\CC$. This is equivalent to the equalities $c_{R,S}^T =c_{S,R}^T$ for
all $R, S,T\in \R(\CC)$. It is easy to see that any symmetric
scheme is commutative (the scheme $\CC$ is called \textit{symmetric }if
each basis relation of it is symmetric, i.e., $R=R^t$ for all $R\in \R(\CC)$).\\
\end{rem}
\textbf{Cells and basis relations}.
Let $\CC =(V,\R)$ be a scheme. Set
$$\text{Cel}(\CC) = \{X\subseteq V:\Delta(X)\in\R\},\quad\quad \text{Cel}^\ast(\CC)= \{X\subseteq V:\Delta(X)\in\R^\ast\}.$$
Each element of the set $\text{cel}(\CC)$ is called a \textit{cell} (resp.
\textit{cellular set}) of the scheme $\CC$. For a permutation group $G$ we have
\begin{equation}\label{cell}
    \text{Cel(Inv}(G)) = \text{Orb}(G),\quad \text{Cel}^\ast(\text{Inv}(G)) = \text{Orb}^\ast(G),
\end{equation}
in  which Orb$(G)$ is the set of all orbits of $G$  on $X$ and $\text{Orb}^\ast(G)$ is the set of all invariant sets of $G$ on $X$.
For instance, the Cells of the trivial scheme on $V$ are exactly
the singletons of $V$ , whereas a scheme of rank 2 on $V$ has the unique
cell, namely, $V$. Let $X, Y\in \text{Cel}^\ast(\CC)$. Then $\Delta(X)$ and
$\Delta(Y)$ are relations of the scheme $\CC$ and so the
adjacency matrices $I_X$ and $I_Y$ of them belong to the algebra
$W = W(\CC)$. This implies that $I_XJ_VI_Y = A(X\times Y)\in W$
and hence $X\times Y\in \R^\ast(\CC)$. Morovere From (C2) it follows that $\Delta(V)=\bigcup_{X\in \text{Cel}(\CC)}\Delta(X)$.
So $V$ is the disjoint union of cells and we have,
\begin{equation}\label{1}
    \R(\CC)=\bigcup_{X,Y\in \text{Cel}(\CC)} \R_{X,Y}\quad\quad(\text{disjoint union}),
\end{equation}

where for $X, Y\in \text{Cel}(\CC)$ we set $$\R_{X,Y} = \R_{X,Y}
(\CC) = \set{R\in\R:R\subseteq X\times Y }$$ For $R\in \R_{X,Y}$
with $X, Y\in \text{Cel}(\CC)$, set
\begin{equation}\label{1}
 d_{out}(R) = c_{R,R^t}^{\Delta(X)},\quad\quad d_{in}(R) =c_{R^t,R}^{\Delta(Y)}.
 \end{equation}
If $A = A(R)$, then $d_{out}(\R)$ (resp. $d_{in}(\R)$) is the number of ones in each row $u$ (resp. each column
$v$) of the matrix $A$ where $u\in X$ (resp. $v\in Y$). From the
definition of intersection numbers it follows that given $(u,
v)\in X\times Y$ we have
$$d_{out}(R) = |R_{out}(u)|,\quad\quad d_{in}(R) = |R_{in}(v)|,$$
where $R_{out}(u)=\set{w\in V : (u,w)\in R}$ and
$R_{in}(v)=\set{w\in V : (w, v)\in R}$. Thus
\begin{equation}\label{valency}
\sum_{R\in\R_{X,Y}}d_{out}(R)=|Y|,\quad\quad\sum_{R\in
\R_{X,Y}}d_{in}(R)=|X|,
\end{equation}
\begin{equation}\label{in&out}
    |X|d_{out}(R)=|R|=|Y|d_{in}(R).
\end{equation}
A scheme $\CC$ is called \textit{homogeneous} or (\textit{ an association scheme}) if $|\text{Cel}(\CC)|=1$
or equivalently, if $\Delta(V)\in \R$. (From (\ref{cell}) it
follows that for a permutation group $G$ the scheme Inv$(G)$ is
homogeneous iff the  group $G$ is transitive.) In this case  for given
$R\in\R$ we have
$$|V|d_{out}(R)=|R|=|V|d_{in}(R),\quad\quad d_{out}(R)=d_{in}(R).$$
The latter number is denoted by $d(R)$ and is called the degree of
the relation $R$. Thus each basis relation of a homogeneous scheme
can be treated as the set of arcs of a regular digraph with the
vertex set V . From (\ref{valency}) it follows that
\begin{equation}\label{calencyass}
\sum_Rd(R)=|V|.
\end{equation}
Suppose that $X\in \text{Cel}(\CC)$ and denote  by $I_X$ the adjacency matrix of $\Delta(X)$, then $I_V=\sum_{X\in \text{Cel}(\CC)}I_X$
is an idempotent decomposition of $I_V$.
We observe that every commutative scheme is homogeneous . (Indeed,
the commutativity of $\CC$ means the commutativity of the
adjacency algebra $W(\CC)$. If $ X, Y\in \text{Cel}(\CC)$ and $X\neq
Y$, then $I_XA(R)=A(R)$ and $I_XA(R^t)= 0$ for all
$R\in\R_{X,Y}$.)\\

\begin{defn}\label{Definition:dsm}
Let $\CC = (V,R)$ be a scheme with its adjacency ring $W$. If $\Z V$ is a free $\Z$-module of rank $|V|$
indexed by $V$, then $W$ acts naturally on the basis set $V$, namely $\Z V$ has the structure of a module over $\text{Mat}_{V}(\Z)$ according to
$$uA:=\sum_{v\in V} A_{u,v}v\quad\quad (A\in \text{Mat}_{V}(\Z), u\in V).$$
Assume that $F$ is a field  and define $FV:=F\bigotimes_{\Z}\Z V$. Then $FV$ can be regarded as $W^F\hspace{-1.5mm}-$module.
We call this  the \textit{standard module} of $W$(resp. $\CC$) over $F$. The character
of $W^F$ afforded by the standard module is called the \textit{standard character}
of $W$. We shall denote the standard character of $W$  by $\rho$ which is calculated in the following lemma (By $\delta$ we mean the Kronecker delta).
\end{defn}
\begin{lem}\label{lemma:standard charcter}For every $R\in \R$ we have~~$\rho(A(R))=\sum_{X\in \text{Cel}(\CC)}\delta_{R,\Delta(X)}\abs{X}$.
\end{lem}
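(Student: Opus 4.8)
The plan is to identify the value $\rho(A(R))$ with an ordinary matrix trace and then to read off the diagonal of $A(R)$ using the axioms (C1) and (C2). First I would make the character explicit. By Definition \ref{Definition:dsm}, the standard module $FV$ carries the action $uA=\sum_{v\in V}A_{u,v}\,v$; writing the linear operator afforded by $A(R)$ in the basis $V$, its matrix is $A(R)$ up to transposition (the operator sends $u$ to $\sum_{v}A(R)_{u,v}v$, so its representing matrix has $(v,u)$-entry $A(R)_{u,v}$), and in either case its trace is $\sum_{u\in V}A(R)_{u,u}$. Since the character is by definition the trace of the representing matrix, this gives $\rho(A(R))=\operatorname{tr}A(R)=\abs{\set{u\in V:(u,u)\in R}}=\abs{R\cap\Delta(V)}$.

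Next I would determine $R\cap\Delta(V)$ from the scheme axioms. By (C1) the basis relations partition $V^2$, and by (C2) the diagonal $\Delta(V)$ is a union of basis relations. Hence any $R\in\R$ either is one of these diagonal relations, in which case $R\subseteq\Delta(V)$, or is disjoint from $\Delta(V)$. In the second case $\abs{R\cap\Delta(V)}=0$. In the first case $R=\set{(u,u):u\in X}=\Delta(X)$ where $X=\set{u\in V:(u,u)\in R}$, and this $X$ is a cell precisely because $\Delta(X)=R\in\R$, i.e. $X\in\text{Cel}(\CC)$; then $\abs{R\cap\Delta(V)}=\abs{\Delta(X)}=\abs{X}$.

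Finally I would match this with the asserted formula. The right-hand side $\sum_{X\in\text{Cel}(\CC)}\delta_{R,\Delta(X)}\abs{X}$ equals $\abs{X}$ when $R=\Delta(X)$ for some cell $X$ and is $0$ otherwise, which is exactly the case distinction obtained above; since $V$ is the disjoint union of its cells, distinct cells yield distinct diagonal relations, so the sum has at most one nonzero term and the cell $X$ is uniquely determined by $R$. Combining the two computations yields $\rho(A(R))=\sum_{X\in\text{Cel}(\CC)}\delta_{R,\Delta(X)}\abs{X}$, as required. I expect the only subtle point to be the bookkeeping in the middle step—checking that a basis relation meeting the diagonal must lie entirely inside it—which is where (C1) and (C2) do the essential work; the identification of the character with the trace is routine.
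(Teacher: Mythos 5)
Your proof is correct. The paper states this lemma without proof, treating it as an immediate computation, so there is no argument of the paper to compare against; your route---identifying $\rho(A(R))$ with the matrix trace $\operatorname{tr}A(R)=\abs{R\cap\Delta(V)}$, and then using (C1) and (C2) to conclude that a basis relation is either contained in the diagonal (hence equal to $\Delta(X)$ for the cell $X=\set{u:(u,u)\in R}$) or disjoint from it---is exactly the standard verification the paper leaves implicit, with the one genuinely necessary point (a relation meeting $\Delta(V)$ must lie inside it) handled correctly.
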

We state here some facts about finite dimensional algebras. Let $A$ be a finite dimensional algebra over $F$
(concerning finite dimensional algebras we refer to \cite{Nagao}). The
\textit{Jacobson radical} Rad($A$) of $A$ is the intersection of all maximal right ideals of $A$.
Also $A$ is said to be \textit{semisimlpe} if Rad$(A)=0$. In section \ref{Section:Discriminant}
we introduce another criterion for semisimplicity of finite dimensional algebras.\\
Let $K$ be an extension field of $F$. Then Rad$(A)\bigotimes_F K\subseteq \text{Rad}(A\bigotimes_F K)$, since Rad$(A)\bigotimes_F K$ is a nilpotent ideal of $A\bigotimes_F K$.
However, they do not necessarily coincide. But if $K$ is a separable extension of $F$, then the equality holds. Also $A$ is called \textit{separable}
 over $F$ if $A$ is semisimple and  $A\bigotimes_F K$ remains semisimple for any extension $K$ of $F$.
\begin{thm}\cite[Theorem II.5.4]{Nagao}\label{Theorem:perfect}
If $F$ is a perfect field (e.g., char$(F)=0$ or $F$ is finite), then every semisimple $F$-algebra is separable over $F$.
\end{thm}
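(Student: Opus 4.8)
The plan is to reduce the separability of $A$ to the semisimplicity of a single base change, namely to an algebraic closure, exploiting the two facts recorded just above the statement: that $\mathrm{Rad}(A)\otimes_F K\subseteq\mathrm{Rad}(A\otimes_F K)$ for every extension $K/F$, and that this inclusion is an \emph{equality} whenever $K/F$ is separable. Since $F$ is perfect, every algebraic extension of $F$ is separable, and this is precisely the leverage the perfectness hypothesis provides. Throughout, $A$ is the given finite-dimensional semisimple $F$-algebra, and fixing an arbitrary extension field $K$ of $F$, the goal is to show $\mathrm{Rad}(A\otimes_F K)=0$.

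First I would choose an algebraic closure $\Omega$ of $K$ and let $\overline{F}$ denote the algebraic closure of $F$ inside $\Omega$, so that $F\subseteq\overline{F}\subseteq\Omega$ and $F\subseteq K\subseteq\Omega$. Applying the equality case to the extension $\overline{F}/F$, which is separable because $F$ is perfect, gives $\mathrm{Rad}(A\otimes_F\overline{F})=\mathrm{Rad}(A)\otimes_F\overline{F}=0$, so $A\otimes_F\overline{F}$ is semisimple over the algebraically closed field $\overline{F}$. The next step is to upgrade this to \emph{split} semisimplicity: by Wedderburn--Artin over an algebraically closed field, the only finite-dimensional division algebra over $\overline{F}$ is $\overline{F}$ itself, whence $A\otimes_F\overline{F}\cong\prod_i\mathrm{Mat}_{n_i}(\overline{F})$.

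Base change of such an algebra along $\overline{F}\subseteq\Omega$ preserves its shape, since $\mathrm{Mat}_n(\overline{F})\otimes_{\overline{F}}\Omega\cong\mathrm{Mat}_n(\Omega)$; hence $A\otimes_F\Omega\cong(A\otimes_F\overline{F})\otimes_{\overline{F}}\Omega\cong\prod_i\mathrm{Mat}_{n_i}(\Omega)$ is again semisimple. Finally I would pull this back to $K$ using the general inclusion for the extension $\Omega/K$: $\mathrm{Rad}(A\otimes_F K)\otimes_K\Omega\subseteq\mathrm{Rad}((A\otimes_F K)\otimes_K\Omega)=\mathrm{Rad}(A\otimes_F\Omega)=0$, and since tensoring a nonzero $K$-vector space with $\Omega$ yields a nonzero space, this forces $\mathrm{Rad}(A\otimes_F K)=0$. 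As $K$ was arbitrary and $A$ is semisimple by hypothesis, $A$ is separable over $F$.

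The main obstacle is that $K$ is a \emph{completely arbitrary} extension, possibly transcendental or inseparable, so the convenient equality $\mathrm{Rad}(A\otimes_F K)=\mathrm{Rad}(A)\otimes_F K$ is not directly available. The device that circumvents this is to route everything through $\Omega=\overline{K}$, which simultaneously contains an algebraic (hence, by perfectness, separable) extension $\overline{F}$ of $F$ over which $A$ becomes split, and which is a faithfully flat base change of $K$. The split form is what makes the argument work: semisimplicity of $A\otimes_F\overline{F}$ alone need not survive an arbitrary base change, whereas a product of full matrix algebras manifestly does.
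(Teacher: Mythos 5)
Your proof is correct. Note that the paper itself offers no argument for this statement at all --- it is quoted verbatim from Nagao--Tsushima \cite[Theorem II.5.4]{Nagao} --- so there is no internal proof to compare against; what you have produced is a self-contained derivation from precisely the two radical-and-base-change facts the paper records just before the theorem, which is a legitimate route. Two points are worth making. First, your detour through $\Omega=\overline{K}$ is exactly what makes the argument robust: the equality $\mathrm{Rad}(A\otimes_F K)=\mathrm{Rad}(A)\otimes_F K$ for separable extensions is often stated only for \emph{algebraic} separable extensions, and you invoke it only for $\overline{F}/F$, which is algebraic and (by perfectness) separable. Had one instead been granted the equality for separable extensions in MacLane's sense (separably generated), the theorem would be a one-liner, since \emph{every} extension of a perfect field is separable in that sense --- that is essentially the textbook argument, and your version trades that stronger input for the Wedderburn--Artin splitting step. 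Second, each remaining step checks out: over the algebraically closed field $\overline{F}$ the Wedderburn division algebras are trivial (any element generates a finite, hence trivial, extension of $\overline{F}$), so $A\otimes_F\overline{F}$ is a finite product of matrix algebras, whose semisimplicity visibly survives the further base change to $\Omega$ via $\mathrm{Mat}_n(\overline{F})\otimes_{\overline{F}}\Omega\cong\mathrm{Mat}_n(\Omega)$; the inclusion $\mathrm{Rad}(B)\otimes_K\Omega\subseteq\mathrm{Rad}(B\otimes_K\Omega)$ for $B=A\otimes_F K$ follows from nilpotence of $\mathrm{Rad}(B)\otimes_K\Omega$, exactly as the paper notes for the general inclusion; and since $\Omega$ is faithfully flat over $K$, the vanishing $\mathrm{Rad}(B)\otimes_K\Omega=0$ indeed forces $\mathrm{Rad}(B)=0$. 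Your closing remark correctly identifies the crux: semisimplicity alone is not stable under arbitrary base change, but split semisimplicity is, and perfectness is used solely to reach the split form over $\overline{F}$.
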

We denote the complete set of representatives of
isomorphim classes of irreducible $A$-modules by IRR($A$).\\
It is well known that $A/\text{Rad}(A)$ is semisimple. If $A$  is a split $F$-algebra, namely $F$ is an splitting field for $A$, we have
$$A/\text{Rad}(A)=\bigoplus_{i=1}^{r}M_{f_i}(F),$$ where $f_i$'s  are the degrees of irreducible representations of $A$. So we have the following
(see \cite{Nagao} for details).
\begin{prop}
Let $A$  be a split $F$-algebra with \text{IRR}$(A)=\set{M_1, . . . , M_r}$ . Then
\begin{equation*}
\text{dim}_F(A)=\sum_{i=1}^{r}(deg~M_i )^2 + dim_F \text{Rad}(A).
\end{equation*}

\end{prop}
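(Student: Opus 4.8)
The plan is to read off the dimension from the Wedderburn--Artin decomposition of the semisimple quotient $A/\text{Rad}(A)$ that is recalled just above the statement. First I would observe that $\text{Rad}(A)$ acts as zero on every irreducible $A$-module, so the irreducible modules of $A$ are precisely those of $A/\text{Rad}(A)$; hence $\text{IRR}(A)=\text{IRR}(A/\text{Rad}(A))=\set{M_1,\dots,M_r}$, and nothing is lost by passing to the quotient.

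Next, because $A$ is a split $F$-algebra, $F$ is a splitting field for the semisimple algebra $A/\text{Rad}(A)$ as well, so the displayed decomposition $A/\text{Rad}(A)=\bigoplus_{i=1}^{r}M_{f_i}(F)$ applies. The point to pin down here is that the integers $f_i$ coincide with $\deg M_i$: the $i$-th simple factor $M_{f_i}(F)$ has a unique irreducible module up to isomorphism, namely the column space $F^{f_i}$, which is exactly $M_i$, so $f_i=\deg M_i$ for each $i$.

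Then I would simply compute dimensions. Since $\dim_F M_{f_i}(F)=f_i^2=(\deg M_i)^2$, additivity of $\dim_F$ over the direct sum gives $\dim_F\bigl(A/\text{Rad}(A)\bigr)=\sum_{i=1}^{r}(\deg M_i)^2$. Finally, applying $\dim_F$ to the short exact sequence of $F$-vector spaces $0\to\text{Rad}(A)\to A\to A/\text{Rad}(A)\to 0$ yields $\dim_F(A)=\dim_F\bigl(A/\text{Rad}(A)\bigr)+\dim_F\text{Rad}(A)$, and substituting the previous identity gives the claimed formula.

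There is no serious obstacle here, since the Wedderburn decomposition is supplied in the excerpt; the only genuine verification is the matching $f_i=\deg M_i$, that is, identifying each matrix-algebra factor with the correct isomorphism class of irreducible module. Everything else is routine bookkeeping with dimensions of $F$-vector spaces.
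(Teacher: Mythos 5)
Your proposal is correct and follows exactly the route the paper intends: the paper states the Wedderburn decomposition $A/\text{Rad}(A)=\bigoplus_{i=1}^{r}M_{f_i}(F)$ just before the proposition and defers the details to Nagao--Tsushima, and your argument simply fills in those details (identifying $f_i=\deg M_i$ and counting dimensions via the quotient). Nothing in your write-up deviates from or goes beyond what the paper's sketch presupposes.
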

Let $W$ be a cellular algebra  and $k\leq K$ be fields. Then there is a natural isomorphism $W^k\bigotimes_kK\simeq W^K$ of $K$-algebras such that
$\alpha\otimes x\mapsto \alpha x$. Thus $W^K$ is just the scalar extension of $W^k$ for every subfield $k$ of $K$. This is quite useful in  the study
of cellular algebras. By using this we can prove the following result.
\begin{lem}\cite[Lemma III.1.28]{Nagao}\label{separable algebra}
If $W^K$ is a cellular algebra over field $K$, then  $W^K/Rad(W^K)$ is separable over $K$.
\end{lem}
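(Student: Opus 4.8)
The key structural feature I would exploit is that a cellular algebra is defined over $\Z$: by construction $W^K = K \otimes_{\Z} W$, so $W^K$ is obtained by scalar extension from the prime field of $K$. Let $k_0$ denote the prime field of $K$, so that $k_0 = \mathbb{Q}$ when $\operatorname{char} K = 0$ and $k_0 = \mathbb{F}_p$ when $\operatorname{char} K = p > 0$. In either case $k_0$ is a perfect field ($\mathbb{F}_p$ being finite), and this is the only place where a hypothesis on the ground field is needed. Setting $W^{k_0} = k_0 \otimes_{\Z} W$, the base-change isomorphism recorded just before the lemma gives $W^{k_0} \otimes_{k_0} K \cong W^K$.

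The plan is then to descend the radical computation to $k_0$. First I would form $B := W^{k_0}/\operatorname{Rad}(W^{k_0})$, which is semisimple because a quotient by the Jacobson radical always is (note that we do not need $W^{k_0}$ itself to be semisimple). Since $k_0$ is perfect, Theorem \ref{Theorem:perfect} shows that $B$ is separable over $k_0$; that is, $B \otimes_{k_0} L$ is semisimple for every field extension $L/k_0$. This is the heart of the argument: perfectness of the prime field upgrades semisimplicity of $B$ to separability.

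Next I would transport this to $K$. Because $k_0$ is perfect, the extension $K/k_0$ is separable, so by the fact quoted in the excerpt the radical commutes with this scalar extension:
$$\operatorname{Rad}(W^K) = \operatorname{Rad}(W^{k_0} \otimes_{k_0} K) = \operatorname{Rad}(W^{k_0}) \otimes_{k_0} K.$$
Quotienting $W^K \cong W^{k_0} \otimes_{k_0} K$ by this ideal yields a $K$-algebra isomorphism $W^K/\operatorname{Rad}(W^K) \cong B \otimes_{k_0} K$. Finally, for any extension $M/K$ one has $(B \otimes_{k_0} K) \otimes_K M \cong B \otimes_{k_0} M$, and $M/k_0$ is again an extension of $k_0$, so separability of $B$ over $k_0$ makes this semisimple. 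Hence $W^K/\operatorname{Rad}(W^K)$ is separable over $K$.

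The step I expect to be the main obstacle --- or at least the one requiring the most care --- is the identity $\operatorname{Rad}(W^{k_0} \otimes_{k_0} K) = \operatorname{Rad}(W^{k_0}) \otimes_{k_0} K$. The inclusion $\supseteq$ is automatic (the right-hand side is a nilpotent ideal of $W^K$), but the reverse inclusion can fail over inseparable extensions; it is exactly the separability of $K/k_0$, guaranteed by perfectness of the prime field, that forces equality. Everything else is a formal manipulation of scalar extensions together with the single observation that a cellular algebra has an integral standard basis and is therefore defined over its prime field.
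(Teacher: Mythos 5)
Your proof is correct and takes essentially the same approach as the paper: the paper establishes this lemma (via its citation of Nagao--Tsushima, Lemma III.1.28) by exactly the descent you describe, namely viewing $W^K$ as the scalar extension $W^{k_0}\otimes_{k_0}K$ over the perfect prime field $k_0$ using the isomorphism recorded just before the lemma, then invoking Theorem \ref{Theorem:perfect} and the quoted compatibility of the radical with separable scalar extensions. Nothing essential differs between your argument and the paper's intended one.
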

\section{Discriminant of algebras}
\label{Section:Discriminant}
Let $\mathfrak{R}$ be a principal ideal domain, and $A$ a free $\mathfrak{R}$-algebra of finite rank $n$. Suppose that $M$ is a finite-dimensional
$A$-module with  a matrix representation $\X$, we define the \textit{discriminant} of the representation module $M$ as follows. The map
$\Phi_M:A\times A\To \mathfrak{R}$ defined by
$$\Phi_M(a,b)=\text{Tr}(\X(ab)),$$ is a symmetric bilinear
form, where Tr is the usual trace of matrices. Let ${a_1,\dots,
a_n}$ be an $\mathfrak{R}$-basis of $A$. We put
$$D_{M,\{a_i\}}(A) =\text{det(Tr}(\X(a_ia_j))).$$
Especially, when the representation $\X$ is the regular
representation, we call\\ $D_{M,\{a_i\}}(A)$ the discriminant of A,
and denote it by $D(A)$.
Note that $D_{M,\{a_i\}}(A)\neq0$ iff $\Phi_{M}$ is nondegenerate.
We note  that $D_{M,\{a_i\}}(A)$ depends on the choice
of the basis $\{a_i\}$ of $A$, but being nondegenerate is
independent on it. i.e., if we take another basis ${a'_1 ,\dots,
a'_r }$, then det(Tr$(\X(a'_ia'_j))) ={\varepsilon}^2
\text{det(Tr}(\X(a_ia_j)))$ for some unit $\varepsilon$ in $\mathfrak{R}$.
Hence, if $\mathfrak{R}=\Z$, then the
discriminant is uniquely determined.\\

Suppose $A$ is not semisimple. Then Rad$(A)\neq0$. If $0\neq a\in \text{Rad}(A)$, then $\Phi_M(a,b)=0$ for any $b\in A$, since each element of Rad$(A)$ is nilpotent. So $\Phi_{M}$ is degenerate.\\
Conversely, assume that $A$ is a semisimple split algebra and
 IRR$(A)=\set{M_1,\dots,M_r}$.
We have $W^{K}\simeq \bigoplus_{i=1}^{r}M_{f_i}(K)$, since $W^{K}$ is a split $K$-algebra . We
consider another basis $B=\set{e_{st}^{(i)}\mid 1\leq i\leq r,\quad 1\leq s,t\leq f_i}$ of $W^{K}$, where $e_{st}^{(i)}$ is
matrix unit in $M_{f_i}(K)$. If we put $M=\bigoplus_{i=1}^{r}M_i$, then
$$\Phi_{M}(A)=\bigoplus_{i=1}^{r}\Phi_{M_i}(A),\quad\quad D_{M,B}(A)=\prod_{i=1}^{r}D_{M_i,B_i}(A),$$
where $B_i=\set{e_{st}^{(i)}\mid1\leq s,t\leq f_i}$ and $\Phi_{M}(A)$ is the direct sum of $\Phi_{M_i}(A)$ for $1\leq i\leq r$ . We may assume that $A=M_{f_i}(K)$ where $f_i$ is the degree of $M_i$.
Given $0\neq a \in A$ with
nonzero entry $a_{ij}$, then
\begin{eqnarray*}
\Phi_{M}(a,e_{ji})&=&\text{Tr}(ae_{ji})=\sum_{t=1}^{f_i}(ae_{ji})_{tt}\\
&=&\sum_{t=1}^{f_i}\sum_{k=1}^{f_i}a_{tk}(e_{ji})_{kt}\\
&=&\sum_{k=1}^{f_i}a_{ik}(e_{ji})_{ki}=a_{ij}\neq0,
\end{eqnarray*}
where $e_{ji}$ is matrix unit. Hence $A$ is nondegenerate.
\begin{thm}\label{Theorem:Reza1}
Let $\CVR$ be a scheme with  standard basis $\set{A_i}$. If $K$ is a field  and $N=KV$ is the standard
module of $\CC$, then
$$D_{N,\set{A_i}}(W^{K})=\varepsilon\prod_{R\in\R}|R|;\quad\quad\varepsilon\in\set{1,-1}.$$
\end{thm}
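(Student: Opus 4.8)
The plan is to reduce the computation of $D_{N,\set{A_i}}(W^{K})$ to the evaluation of the determinant of an explicit matrix whose only nonzero entries are the cardinalities $\abs{R}$. First I would observe that since $N = KV$ is the standard module, the representation $\X$ afforded by $N$ sends each element of $W^{K}\subseteq\text{Mat}_V(K)$ to its own action by matrix multiplication on $KV$; in other words $\X(A) = A$ as a matrix in $\text{Mat}_V(K)$, so that $\text{Tr}(\X(\cdot))$ is just the ordinary matrix trace. Consequently the Gram matrix entries are $\Phi_N(A(R),A(S)) = \text{Tr}(A(R)A(S))$ for $R,S\in\R$.

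Next I would compute these entries directly. Writing $\text{Tr}(A(R)A(S)) = \sum_{u,v}A(R)_{u,v}A(S)_{v,u}$ and using $A(S)_{v,u} = A(S^t)_{u,v}$ shows that the summand $A(R)_{u,v}A(S)_{v,u}$ equals $1$ exactly when $(u,v)\in R\cap S^t$. Since $\R$ partitions $V^2$ (condition (C1)) and $S^t\in\R$ (condition (C3)), the intersection $R\cap S^t$ is either all of $R$, when $S = R^t$, or empty. Hence $\text{Tr}(A(R)A(S)) = \abs{R}\,\delta_{S,R^t}$.

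The Gram matrix $T = (\text{Tr}(A(R)A(S)))_{R,S\in\R}$ therefore has, in each row indexed by $R$, a single nonzero entry $\abs{R}$ located in the column indexed by $R^t$. Thus $T$ is a monomial (generalized permutation) matrix attached to the involution $\tau\colon R\mapsto R^t$ of the index set $\R$, and I would finish by the standard determinant formula for such matrices: the Leibniz expansion leaves only the term corresponding to $\tau$, so $\det(T) = \text{sign}(\tau)\prod_{R\in\R}\abs{R}$. Because $\tau$ is an involution its sign is $\pm 1$, and setting $\varepsilon = \text{sign}(\tau)$ yields the claimed identity.

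I expect the only point requiring genuine care to be the trace computation in the second step, specifically the observation that the partition property of the basis relations forces the off-diagonal intersections $R\cap S^t$ to collapse to either $R$ or $\emptyset$, which is exactly what puts $T$ into monomial form. Once that is in place the determinant evaluation is routine, and the sign ambiguity $\varepsilon\in\set{1,-1}$ is precisely the parity of the number of non-symmetric pairs $\set{R,R^t}$ with $R\neq R^t$.
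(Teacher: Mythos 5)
Your proof is correct, and it shares the paper's overall skeleton: both reduce $D_{N,\set{A_i}}(W^{K})$ to the Gram matrix with entries $\Phi_N(A(R),A(S))=\delta_{S,R^t}\abs{R}$, and then evaluate its determinant by the Leibniz expansion, in which only the transpose involution $\tau\colon R\mapsto R^t$ contributes, giving $\varepsilon=sgn(\tau)=\pm1$. Where you genuinely differ is in how the Gram entries are computed. The paper proceeds character-theoretically: it writes $\Phi_N(A(R),A(S))=\rho(A(R)A(S))$, expands the product via the intersection numbers as $A(R)A(S)=\sum_{T\in\R}c_{R,S}^{T}A(T)$, applies the formula for the standard character on basis matrices (Lemma \ref{lemma:standard charcter}) to pick out the diagonal relations $\Delta(X)$, and then uses $c_{R,S}^{\Delta(X)}=\delta_{S,R^t}d_{out}(R)$ together with $\abs{X}d_{out}(R)=\abs{R}$. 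You instead compute the ordinary matrix trace directly, $\text{Tr}(A(R)A(S))=\sum_{u,v}A(R)_{u,v}A(S)_{v,u}=\abs{R\cap S^t}$, and invoke only the partition axiom (C1) and closure under transpose (C3) to conclude that $R\cap S^t$ is either $R$ or empty. Your route is more elementary and self-contained, needing neither the character lemma nor the valency identities; the paper's version keeps the computation inside the intersection-number formalism it develops (and reuses elsewhere), making explicit how the valency $d_{out}(R)$ and the cell size $\abs{X}$ combine to give $\abs{R}$. Your preliminary observation that the representation afforded by the standard module is just $\X(A)=A$, so that $\text{Tr}(\X(\cdot))$ is the ordinary trace, is the correct justification for this shortcut and is implicitly what makes the paper's use of $\rho$ equivalent to your direct count.
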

\begin{proof}
Assume that $\rho$ is the standard character of $\CC$. Then by lemma \ref{lemma:standard charcter} and (\ref{1})  we have

\begin{eqnarray*}
\Phi_{N}\Bigl(A(R),A(S)\Bigr)
&=&\rho\Bigl(A(R)A(S)\Big)\\
&=&\sum_{T\in\R}c_{R,S}^{T}\rho(A(T))\\
&=&\sum_{T\in\R}\sum_{X\in \text{Cel}(\CC)}c_{R,S}^{T}\delta_{T,\Delta(X)}\abs{X}\\
&=&\delta_{S,R^t}c_{R,S}^{\Delta(X)}\abs{X}\quad\quad (R\subseteq X\times Y)\\
&=&\delta_{S,R^t}d_{out}(R)\abs{X}
=\delta_{S,R^t}\abs{R}.
\end{eqnarray*}
So
\begin{eqnarray*}
D_{N,\set{A_i}}(W^{K})
&=&\sum_{\sigma\in Sym(\R)}sgn(\sigma)\prod_{R\in\R}\Phi_{N}(A(R),A(R^\sigma))\\
&=&\sum_{\sigma\in Sym(\R)}sgn(\sigma)\prod_{R\in\R}\delta_{R^\sigma,R^t}\abs{R}\\
&=&\varepsilon\prod_{R\in\R}|R|;\quad\quad\varepsilon\in\set{1,-1}.
\end{eqnarray*}
\end{proof}
\section{Frame number}}
In this section, we define the Frame number of a scheme. This
number was introduced by Frame \cite{Frame1941} and  was extended  to cellular algebras by
D.G. Higman \cite{{Higman1975}}.\\\\
Let $\CC=(V,\R)$ be a scheme and $W\leq \text{Mat}_V(\Z)$ the adjacency
ring of $\CC$. Suppose that IRR$(W^{\C})=\{M_1,\dots,M_r\}$, $f_i= dim(M_i)$ and $\C V$ is the standard module of $\CC$ over complex field $\C$.
As $W^{\C}$ is semisimlple (see Proposition \ref{Proposition: Semisimlicity char0}), then we have
$$\C V=\bigoplus_{i=1}^{r}m_iM_i.$$
We call $m_i$ the \textit{multiplicity }of $M_i$.

If we consider
matrix form of \textit{Schur relations} of $\CC$(see \cite{Higman1975}), then we obtain the
following
\begin{equation}\label{Fr}
\mathfrak{N}(\CC)=\frac{\prod_{X\in
\text{Cel}(\CC)}|X|^{-2}\prod_{R\in\R}|R|}{\prod_{i=1}^{r}{m_i^{f_i^2}}},
\end{equation}
where the number $\mathfrak{N}(\CC)$ is called the \textit{Frame Quotient} of $\CC$. It is  well known
that $\mathfrak{N}(\CC)$  is a rational integer (see \cite{Higman1975}). We define the \textit{Frame number} $\F(\CC)$ by
$$\F(\CC)=\frac{\prod_{R\in\R}|R|}{\prod_{i=1}^{r}m_i^{f_i^2}}.$$ It is clear that $\F(\CC)$ is  a rational integer.
This number is a criterion for the semisimplicity of
cellular algebras, which will appear in our main result.\\
\begin{thm}\label{Theorem:Hanaki2}
Let $K$ be a field of characteristic zero and $W^{K}$ be a  split cellular algebra over $K$. If  IRR $(W^{K})=\set{M_1,\dots,M_r}$ and
$M=\bigoplus_{i=1}^{r}M_i$, then
 $$D_{M,\set{A_i}}(W^{K})=\varepsilon\F(\CC) ;\quad\quad\varepsilon\in\set{1,-1}$$
where $\set{A_i}$ is the  standard basis basis  of $W$.
\end{thm}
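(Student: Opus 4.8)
The plan is to compute $D_{M,\{A_i\}}(W^K)$ by comparing two expressions for the discriminant: one coming from the standard module $N = KV$ (already computed in Theorem \ref{Theorem:Reza1}), and one coming from the decomposition module $M = \bigoplus_i M_i$ built from the irreducible constituents. Since $K$ has characteristic zero, $W^K$ is semisimple and split, so by the matrix-unit computation preceding Theorem \ref{Theorem:Reza1} each $\Phi_{M_i}$ is nondegenerate and the discriminant factors as $D_{M,B}(W^K) = \prod_{i=1}^r D_{M_i,B_i}(W^K)$ in the matrix-unit basis $B$. The key point is that the discriminant in the standard basis $\{A_i\}$ and in $B$ differ only by the square of the determinant of the change-of-basis matrix (a unit-squared factor, hence a sign under the appropriate normalization), so it suffices to track how the two characters $\rho$ and $\chi_M = \sum_i \chi_{M_i}$ are related.

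First I would recall that the standard character decomposes as $\rho = \sum_{i=1}^r m_i \chi_{M_i}$, where $m_i$ is the multiplicity of $M_i$ in the standard module $N = KV$. This expresses the standard bilinear form $\Phi_N$ as the weighted sum $\Phi_N = \sum_i m_i \Phi_{M_i}$. Passing to determinants over the matrix-unit basis $B$, and using that the block $B_i$ has size $f_i^2$ (the dimension of the $i$-th matrix block $M_{f_i}(K)$), the scaling of $\Phi_{M_i}$ by $m_i$ multiplies the corresponding block determinant by $m_i^{f_i^2}$. Therefore
$$
D_{N,B}(W^K) = \prod_{i=1}^r m_i^{f_i^2}\, D_{M_i,B_i}(W^K) = \Bigl(\prod_{i=1}^r m_i^{f_i^2}\Bigr) D_{M,B}(W^K).
$$
Combining this with the value $D_{N,\{A_i\}}(W^K) = \varepsilon \prod_{R\in\R}|R|$ from Theorem \ref{Theorem:Reza1}, and absorbing the (unit-squared) change-of-basis factor between $\{A_i\}$ and $B$ into the sign $\varepsilon$, yields
$$
D_{M,\{A_i\}}(W^K) = \varepsilon\,\frac{\prod_{R\in\R}|R|}{\prod_{i=1}^r m_i^{f_i^2}} = \varepsilon\,\F(\CC),
$$
which is exactly the claimed identity once we recall the definition of $\F(\CC)$.

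The main obstacle I anticipate is bookkeeping the sign and the change-of-basis factor carefully: moving between the standard basis $\{A_i\}$ and the matrix-unit basis $B$ introduces a factor $\det(P)^2$ for the transition matrix $P$, and one must verify that this is a square of a unit (so that it does not affect the value beyond the sign $\varepsilon \in \{1,-1\}$) rather than an arbitrary nonzero scalar that would spoil the exact equality with $\F(\CC)$. This is where the integrality of the Frame number and the fact that both discriminants are computed relative to $\Z$-bases (so their ratio is forced to be a unit square) does the real work; the character-multiplicity identity $\rho = \sum_i m_i \chi_{M_i}$ and the block-diagonal structure of the semisimple split algebra are the routine ingredients.
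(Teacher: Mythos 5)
Your overall route is the paper's route: compare the discriminant of $M=\bigoplus_{i=1}^{r}M_i$ with that of the standard module $N=KV$ through the matrix-unit basis $B$, use $\Phi_N(e^{(i)}_{st},e^{(j)}_{uv})=\delta_{ij}\delta_{sv}\delta_{tu}m_i$ to obtain $D_{N,B}(W^K)=\bigl(\prod_{i=1}^{r} m_i^{f_i^2}\bigr)D_{M,B}(W^K)$, and then invoke Theorem \ref{Theorem:Reza1}. That computational skeleton, including the decomposition $\rho=\sum_i m_i\chi_{M_i}$ behind it, is exactly what the paper does.

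However, your handling of the change-of-basis factor contains a genuine error. You assert that $(\det P)^2$, for $P$ the transition matrix between $\set{A_i}$ and $B$, is ``a square of a unit'' that can be absorbed into the sign $\varepsilon$, and you justify this by claiming both discriminants are computed relative to $\Z$-bases whose ratio is forced to be a unit square. Neither claim is true. The matrix-unit basis $B$ is only a $K$-basis of $W^K\simeq\bigoplus_i M_{f_i}(K)$; it is not a $\Z$-basis of the integral adjacency ring, and the matrix units generally have entries with denominators relative to the $\Z$-span of the $A_i$. In fact the paper's own proof shows $D_{M,\set{A_i}}(W^K)=\pm(\det P)^2$ and concludes $(\det P)^2=\pm\F(\CC)$: the factor you propose to discard as a sign is, up to sign, precisely the Frame number, which is typically a large integer (for the rank-$2$ scheme on $n$ points it equals $n^2$). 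The good news is that your argument does not need the false claim at all, because the \emph{same} factor $(\det P)^2$ multiplies both discriminants under the basis change and therefore cancels: $D_{N,\set{A_i}}=(\det P)^2 D_{N,B}=(\det P)^2\prod_i m_i^{f_i^2}D_{M,B}=\prod_i m_i^{f_i^2}D_{M,\set{A_i}}$, whence Theorem \ref{Theorem:Reza1} gives $D_{M,\set{A_i}}(W^K)=\varepsilon\prod_{R\in\R}\abs{R}\big/\prod_i m_i^{f_i^2}=\varepsilon\F(\CC)$ directly. With that single correction --- replace ``the factor is a unit square'' by ``the factor cancels'' --- your proof coincides with the paper's; as written, the step you identify as doing ``the real work'' is both false and unnecessary.
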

\begin{proof}
We have $W^{K}\simeq \bigoplus_{i=1}^{r}M_{f_i}(K)$, since $W^{K}$ is a split $K$-algebra . We
consider another basis $B=\set{e_{st}^{(i)}\mid 1\leq i\leq r,\quad 1\leq s,t\leq f_i}$ of $W^{K}$. Let $P$ be the transformation matrix of the bases
$\set{A_i}$ and $B$. Then
$$D_{M,\set{Ai}}(W^{K})=D_{M,B}(W^{K})(\text{det} P)^2,$$
and we have $D_{M,B}(W^{K})=\pm1$ by
$\Phi_M(e_{st}^{(i)},e_{uv}^{(j)})=\delta_{ij}\delta_{sv}\delta_{tu}$.\\\\
Thus $D_{M,\set{Ai}}=\pm(\text{det} P)^2$. Next we put $N=KV$, the standard module. Then $N=\bigoplus_{i=1}^{r}m_iM_i$ and
$\Phi_N(e_{st}^{(i)},e_{uv}^{(j)})=\delta_{ij}\delta_{sv}\delta_{tu}m_i$. Thus
$$D_{N,\set{A_i}}(W^{K})=D_{N,B}(W^{K})(\text{det} P)^2=\pm\prod_{i=1}^{r}m_i^{f_i^2}(\text{det} P)^2.$$
By the Theorem \ref{Theorem:Reza1}, $D_{N,\set{A_i}}(W^{K})=\varepsilon\prod_{R\in\R}|R|$ where $\varepsilon\in\set{1,-1}$. Now we have
$$D_{M,\set{Ai}}(W^{K})=\pm(\text{det} P)^2=\pm\frac{\prod_{R\in\R}|R|}{\prod_{i=1}^{r}m_i^{f_i^2}}=\pm\F(\CC).$$
\end{proof}

\section{Semisimlicity}\label{Section: Semisimlicity}
Let $A\leq \text{Mat}_V(\C)$ and suppose that $A$ is closed under the complex conjugate transpose map. Then $A$ is semisimple.
Let $\mathcal{C}=(V,\mathcal{R})$  be a scheme with cellular ring $W$. Also suppose that $\{A_i\}$ is the
standard basis of $W$. By definition of schemes $W^\C$ is closed under the complex conjugate transpose map and thus it is semisimple.
\begin{prop}\label{Proposition: Semisimlicity char0}
Let $k$ be a field of characteristic $p$. Then in the above notation, the following hold:
\begin{enumerate}
    \item  If $p\nmid\prod_{R\in \R}\abs{R}$, then $W^{k}$  is
semisimple.
    \item If $p\mid\prod_{X\in \text{Cel}(\CC)}\abs{X}$, then $W^{k}$ is not semisimple.
\end{enumerate}
\end{prop}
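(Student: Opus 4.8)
The plan is to prove the two parts by entirely different mechanisms: (1) through the trace form of Section \ref{Section:Discriminant}, and (2) by exhibiting an explicit nilpotent ideal. For (1) I would feed the standard module $N=kV$ into the discriminant machinery. By Theorem \ref{Theorem:Reza1}, which is stated over an arbitrary field, $D_{N,\set{A_i}}(W^{k})=\varepsilon\prod_{R\in\R}\abs{R}$ with $\varepsilon\in\set{1,-1}$, where the right-hand side is the integer $\prod_{R}\abs{R}$ read in $k$. The hypothesis $p\nmid\prod_{R}\abs{R}$ says exactly that this element is nonzero in $k$, i.e. the form $\Phi_{N}$ is nondegenerate. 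Now I invoke the argument given just before Theorem \ref{Theorem:Reza1}: if $W^{k}$ were not semisimple, then any $0\neq a\in\mathrm{Rad}(W^{k})$ would satisfy $ab\in\mathrm{Rad}(W^{k})$, so $ab$ is nilpotent and $\Phi_{N}(a,b)=\rho(ab)=\mathrm{Tr}(\X(ab))=0$ for every $b$, making $\Phi_{N}$ degenerate. Contrapositively, nondegeneracy of $\Phi_{N}$ forces $W^{k}$ to be semisimple, which is (1). I would stress that this direction needs no splitting-field or separability hypothesis: degeneracy of the trace form of a single module already obstructs the existence of a radical element.

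For (2) I would produce a nonzero nilpotent two-sided ideal, so that $\mathrm{Rad}(W^{k})\neq 0$. Fix a cell $X$ with $p\mid\abs{X}$ and set $J_{X}:=A(X\times X)$, which lies in $W^{k}$ since $I_{X}J_{V}I_{X}=A(X\times X)$. Because $J_X$ is all ones on the block $X\times X$ and zero elsewhere, $J_{X}^{2}=\abs{X}J_{X}=0$ in $W^{k}$, so $J_X$ is a nonzero nilpotent element. I would take $\mathcal{I}:=W^{k}J_{X}W^{k}$ and show $\mathcal{I}^{2}=0$. The computation rests on two elementary block identities coming from counting ones in rows and columns as in \eqref{valency}: for $S\in\R_{Z,X}$ and $T\in\R_{X,Y'}$ one gets $A(S)J_{X}A(T)=d_{out}(S)\,d_{in}(T)\,A(Z\times Y')$, while two all-ones blocks satisfy $A(Z\times Y')A(Y'\times Y'')=\abs{Y'}A(Z\times Y'')$. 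The decisive point is that the scalar $d_{in}(T)$ can be nonzero in $k$ only when $p\mid\abs{Y'}$: by \eqref{in&out} one has $\abs{Y'}d_{in}(T)=\abs{X}d_{out}(T)\equiv 0\pmod p$, so $p\nmid\abs{Y'}$ would force $d_{in}(T)\equiv 0$; symmetrically a nonzero $d_{out}(S)$ forces $p\mid\abs{Z}$. Hence every block $A(Z\times Y')$ occurring with nonzero coefficient among the generators of $\mathcal{I}$ has $p\mid\abs{Z}$ and $p\mid\abs{Y'}$, and multiplying two such blocks produces a factor $\abs{Y'}\equiv 0$. Therefore $\mathcal{I}^{2}=0$; since $J_{X}\in\mathcal{I}$ is nonzero, $\mathcal{I}$ is a nonzero nilpotent ideal, whence $\mathrm{Rad}(W^{k})\supseteq\mathcal{I}\neq 0$ and $W^{k}$ is not semisimple.

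The routine part of (2) is the bookkeeping of the block multiplication rules. The genuinely decisive observation, and the step I expect to be the crux, is the propagation of divisibility: the hypothesis $p\mid\abs{X}$ by itself neither makes $J_X$ central nor makes $kJ_X$ an ideal, but through the identity $\abs{Z}d_{out}(S)=\abs{X}d_{in}(S)$ it forces all transition valencies into and out of $X$ that survive modulo $p$ to begin and end at cells whose size is again divisible by $p$, and this is precisely what collapses $\mathcal{I}^{2}$ to zero.

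A cleaner but less self-contained alternative for (2) would be to pass to the corner algebra $I_{X}W^{k}I_{X}$, which is the adjacency algebra of the homogeneous scheme carried by $\R_{X,X}$ on the cell $X$. In that homogeneous algebra the identity $A(R)J_{X}=d(R)J_{X}=J_{X}A(R)$ shows at once that $kJ_{X}$ is a two-sided ideal with $J_{X}^{2}=\abs{X}J_{X}=0$, so $0\neq J_X\in\mathrm{Rad}(I_{X}W^{k}I_{X})$; the standard identity $\mathrm{Rad}(I_{X}W^{k}I_{X})=I_{X}\mathrm{Rad}(W^{k})I_{X}$ then transfers non-semisimplicity back to $W^{k}$. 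I would favour the direct ideal argument above in the write-up, since it avoids appealing to the idempotent-radical lemma, which is not recorded in the preceding sections.
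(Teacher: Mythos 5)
Your proposal is correct, and both halves take genuinely different routes from the paper. For (1) the paper simply cites \cite[Theorem 4.1.3]{Zieschang1996}, whereas you derive it internally: Theorem \ref{Theorem:Reza1} is indeed stated and proved over an arbitrary field, so $p\nmid\prod_{R\in\R}|R|$ gives $D_{N,\{A_i\}}(W^{k})\neq 0$, hence $\Phi_N$ is nondegenerate, and the radical-versus-trace-form argument at the start of Section \ref{Section:Discriminant} (which, as you note, needs no splitting or separability hypothesis in this direction) yields $\mathrm{Rad}(W^{k})=0$. This buys self-containedness at no cost. For (2) the paper constructs an explicit \emph{central} nilpotent element, split into two cases: $P=\sum_{X}\bigl(\prod_{Y\neq X}|Y|\bigr)J_X$ when some cell size is prime to $p$, and $p^{\lambda}\sum_{X}J_X/|X|$ (with $\lambda$ the maximal $p$-adic valuation of the cell sizes) otherwise; you instead take the two-sided ideal $\mathcal{I}=W^{k}J_XW^{k}$ attached to a single cell with $p\mid|X|$, with no case distinction. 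Your route is in fact more robust: the paper's first case can produce $P=0$ (e.g.\ cells of sizes $2,2,3$ with $p=2$, where every coefficient $\prod_{Y\neq X}|Y|$ is even), so its case analysis does not cover all configurations, while your ideal is nonzero unconditionally because it contains $J_X$ itself. One simplification you could make: the divisibility-propagation bookkeeping is unnecessary, since $J_XA(T)J_X=0$ already holds for every basis relation $T$ (either the blocks mismatch, or a factor $|X|\equiv 0$ appears), whence $\mathcal{I}^{2}=W^{k}\bigl(J_XW^{k}J_X\bigr)W^{k}=0$ at once; your corner-algebra variant is also sound but, as you say, imports the identity $\mathrm{Rad}(I_XW^{k}I_X)=I_X\mathrm{Rad}(W^{k})I_X$ from outside the paper.
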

\begin{proof}
For (1) see \cite[Theorem 4.1.3]{Zieschang1996}.
If $p\nmid\abs{X}$ for some $X$ in Cel$(\CC)$, then $P=\sum_{X\in Cel(\CC)}\prod_{Y\in Cel(\CC),Y\neq X}|Y|J_X$ is  a nonzero central nilpotent element
of $W^{k}$.Otherwise $|X|=p^{\alpha_{X}}m$, $(p,m)=1$. If we put $\lambda:=Max\set{\alpha_{X}|X\in \text{Cel}(\CC)}$,
then $p^{\lambda}\sum_{X\in Cel(\CC)}J_X/|X|$ is a nonzero central nilpotent element of $W^{k}$ and thus $W^{k}$ is not semisimple.

\end{proof}

\begin{thm}\cite[Hanaki]{Hanaki2000}\label{Theorem:Hanaki1}
Let $k$ be a field of characteristic p.
Suppose $\mathcal{C}=(V,\mathcal{R})$ is an association scheme. Then $W^{k}$
is semisimple if and only if the Frame number $\mathcal{F}(\CC)$
is not divided by $p$.
\end{thm}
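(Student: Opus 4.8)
The plan is to reduce semisimplicity of $W^{k}$ to the nondegeneracy of a single trace form --- that of the multiplicity-free module $M=\bigoplus_{i=1}^{r}M_{i}$ --- and to evaluate its discriminant by transporting the characteristic-zero computation of Theorem \ref{Theorem:Hanaki2} down to $k$. First I would reduce to the case where $k$ is a splitting field. Since $W^{k}/\mathrm{Rad}(W^{k})$ is separable over $k$ by Lemma \ref{separable algebra}, semisimplicity of $W^{k}$ is equivalent to that of $W^{k'}$ for every extension $k'/k$: the inclusion $\mathrm{Rad}(W^{k})\otimes_{k}k'\subseteq\mathrm{Rad}(W^{k'})$ gives one implication, while separability of the semisimple quotient gives $W^{k}\otimes_{k}k'=W^{k'}$ semisimple whenever $W^{k}$ is. So I may enlarge $k$ to be a splitting field and fix a $p$-modular system $(\mathcal{O},K,k)$, with $\mathcal{O}$ a complete discrete valuation ring, $K=\mathrm{Frac}(\mathcal{O})$ of characteristic zero large enough to split $W^{K}$, and residue field $k$; here $W^{K}$ is split semisimple by Proposition \ref{Proposition: Semisimlicity char0}(1).

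Next I would realize the form integrally. Let $L$ be a full $W^{\mathcal{O}}$-stable $\mathcal{O}$-lattice in $M$ (take $L=W^{\mathcal{O}}\!\cdot L_{0}$ for any spanning lattice $L_{0}$), affording a representation $\X_{L}$ with entries in $\mathcal{O}$. Then the Gram matrix $G=\bigl(\mathrm{Tr}(\X_{L}(A(R)A(S)))\bigr)_{R,S\in\R}$ is an $\mathcal{O}$-matrix indexed by $\R$, and since its entries equal $\Phi_{M}(A(R),A(S))$ computed over $K$, Theorem \ref{Theorem:Hanaki2} yields $\det G=\varepsilon\,\F(\CC)$ with $\varepsilon\in\set{1,-1}$. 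Reducing modulo the maximal ideal, $\overline{L}=k\otimes_{\mathcal{O}}L$ is a genuine $W^{k}$-module whose trace form $\Phi_{\overline{L}}$ has Gram matrix $\overline{G}$ in the standard basis, so $\det\Phi_{\overline{L}}=\overline{\F(\CC)}$ in $k$. Because $\F(\CC)\in\Z$, this shows that $p\nmid\F(\CC)$ \emph{if and only if} $\Phi_{\overline{L}}$ is nondegenerate.

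The ``if'' direction is then immediate from the general criterion recorded before Theorem \ref{Theorem:Reza1}: if $p\nmid\F(\CC)$ but $W^{k}$ were not semisimple, choose $0\neq a\in\mathrm{Rad}(W^{k})$; for every $b$ the element $ab\in\mathrm{Rad}(W^{k})$ is nilpotent, hence $\Phi_{\overline{L}}(a,b)=\mathrm{Tr}(\X_{\overline{L}}(ab))=0$, contradicting nondegeneracy. Thus $W^{k}$ is semisimple.

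For the ``only if'' direction, assume $W^{k}$ is semisimple; I must show $\Phi_{\overline{L}}$ is nondegenerate, i.e.\ that each simple $W^{k}$-module $S_{j}$ occurs in $\overline{L}=\bigoplus_{i}\overline{M_{i}}$ with multiplicity prime to $p$. Here I would invoke the Cartan--decomposition formalism for $(\mathcal{O},K,k)$ (see \cite{Nagao}): the Cartan matrix factors as $C=D^{\mathrm{t}}D$, where $D=(d_{ij})$ with $d_{ij}=[\overline{M_{i}}:S_{j}]$ is the decomposition matrix. Semisimplicity of $W^{k}$ forces $C=I$, so $D^{\mathrm{t}}D=I$; as $D$ has nonnegative integer entries with orthonormal columns, each column carries a single $1$, whence the multiplicity of $S_{j}$ in $\overline{L}$ is $\sum_{i}d_{ij}=1$ for every $j$. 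For a split semisimple algebra the trace form of $\bigoplus_{j}n_{j}S_{j}$ has discriminant $\pm\prod_{j}n_{j}^{(\deg S_{j})^{2}}$, which for all $n_{j}=1$ is a unit, so $\Phi_{\overline{L}}$ is nondegenerate and $p\nmid\F(\CC)$. I expect this direction to be the main obstacle: it is exactly where one must control the reduction modulo $p$ of the lattice $L$, and the essential inputs are the independence of the decomposition numbers from the chosen lattice (Brauer--Nesbitt) together with the identity $C=D^{\mathrm{t}}D$ --- both of which are made available only by passing through the complete discrete valuation ring $\mathcal{O}$ and a splitting field. (Notably, this argument nowhere uses homogeneity, so it should persist for general cellular algebras.)
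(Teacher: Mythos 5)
Your proposal is correct, and its overall architecture is the same as the paper's: the paper actually proves the general Theorem \ref{Theorem:Reza2} (quoting the present statement from \cite{Hanaki2000}), and, like you, it passes to a splitting $p$-modular system, realizes the multiplicity-free module $M=\bigoplus_{i}M_{i}$ over the valuation ring, identifies the reduction of the Gram determinant $D_{M,\set{A_i}}(W^{K})=\pm\F(\CC)$ via Theorem \ref{Theorem:Hanaki2}, obtains the easy implication from ``radical elements annihilate the trace form,'' and removes the splitting-field hypothesis by separability; neither argument uses homogeneity. The genuine divergence is in the hard direction (semisimple $\Rightarrow$ multiplicity-free reduction): the paper invokes Proposition \ref{Proposition:Hanaki3}, i.e.\ Dade's correspondence of central primitive idempotents \cite{Dade1973}, to get a dimension-preserving bijection between $\mathrm{IRR}(W^{K})$ and $\mathrm{IRR}(W^{F})$, whereas you use Brauer reciprocity $C=D^{\mathrm{t}}D$ plus integrality of the decomposition numbers. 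One caution about your key citation: in \cite{Nagao} the identity $C=D^{\mathrm{t}}D$ is proved for group algebras, where it can be made to rest on the symmetric-algebra structure of $\mathcal{O}G$; adjacency algebras need not be symmetric in characteristic $p$ (this is part of the very Bannai--Ito question recalled in the introduction), so the identity cannot be quoted as stated. It is nevertheless true for any $\mathcal{O}$-order $\Lambda$ with $K\Lambda$ split semisimple and $\Lambda/\pi\Lambda$ split: lifting a primitive idempotent $\bar f_{j}$ (with $\bar\Lambda\bar f_{j}$ the projective cover of $S_{j}$) to $f_{j}\in\Lambda$ by completeness, one checks $[\overline{L_{i}}:S_{j}]=\dim_{k}\bar f_{j}\overline{L_{i}}=\mathrm{rank}_{\mathcal{O}}\,f_{j}L_{i}=\dim_{K}f_{j}M_{i}$, so the ``lattice'' and ``projective'' decomposition matrices coincide, and Brauer--Nesbitt then gives $C=D^{\mathrm{t}}D$; alternatively, reducing the regular lattice yields $f_{i}=\sum_{j}d_{ij}g_{j}$ and $g_{j}=\sum_{i}f_{i}d_{ij}$, and positivity of the dimension vectors already forces $D$ to be a permutation matrix. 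With that supplement your argument is complete, and what it buys is a proof of the crucial correspondence that is self-contained in lattice theory over the complete discrete valuation ring, independent of Dade's block-extension machinery; what the paper's route buys is the stronger structural statement (the explicit block-by-block correspondence of central primitive idempotents) at the cost of importing \cite{Dade1973}.
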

The following is our main result and it is a generalization of
Theorem \ref{Theorem:Hanaki1}, in which we do not need that $W$ is
homogeneous.
\begin{thm}\label{Theorem:Reza2}
Let $k$ be a field of characteristic p. Then $W^{k}$ is semisimple
if and only if the Frame number $\mathcal{F}(\CC)$ is not divided
by $p$.
\end{thm}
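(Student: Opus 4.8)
The plan is to characterize semisimplicity of $W^k$ by the nonvanishing modulo $p$ of a discriminant whose value we already know, from Theorem \ref{Theorem:Hanaki2}, to equal $\pm\F(\CC)$.

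First I would reduce to the case that $k$ is a splitting field of characteristic $p$. Since $\F(\CC)$ is a fixed integer depending only on $\CC$, the condition $p\nmid\F(\CC)$ is insensitive to field extensions; and for any extension $K/k$ one has $W^k$ semisimple $\iff$ $W^K$ semisimple. Indeed, if $\text{Rad}(W^k)=0$ then $W^k$ is separable over $k$ by Lemma \ref{separable algebra}, so $W^K=W^k\otimes_kK$ stays semisimple, while conversely a nonzero element $a\in\text{Rad}(W^k)$ yields $0\neq a\otimes1\in\text{Rad}(W^k)\otimes_kK\subseteq\text{Rad}(W^K)$. Hence we may freely enlarge $k$ to a splitting field of characteristic $p$.

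Next I would fix a $p$-modular system $(\mathcal{O},K,k)$ with $\mathcal{O}$ a complete discrete valuation ring, $K=\text{Frac}(\mathcal{O})$ of characteristic zero, residue field $k=\mathcal{O}/\mathfrak{p}$, and both $K$ and $k$ splitting fields for $W$. Let $M=\bigoplus_{i=1}^{r}M_i$ be the direct sum of the irreducible $W^K$-modules and form the Gram matrix $G=\bigl(\text{Tr}_M(A_aA_b)\bigr)_{a,b}$ relative to the standard basis $\set{A_i}$. Because each $M_i$ carries a $W^{\mathcal{O}}$-invariant lattice, every entry lies in $\mathcal{O}$ (in fact in $\Z$, since the set of irreducible characters is Galois-stable, so $\sum_i\chi_{M_i}$ is rational-valued), and Theorem \ref{Theorem:Hanaki2} gives $\det G=\pm\F(\CC)$. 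Reducing modulo $\mathfrak{p}$ turns $G$ into the Gram matrix $\overline{G}$ of the trace form $\Phi_{\overline{M}}$ on $W^k$, where $\overline{M}$ is the reduction of $M$ (its Brauer character being the reduction of $\sum_i\chi_{M_i}$); thus $\det\overline{G}=\pm\F(\CC)\bmod p$, so that $\det\overline{G}\neq0$ in $k$ is precisely the assertion $p\nmid\F(\CC)$.

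It then remains to prove that $W^k$ is semisimple if and only if $\det\overline{G}\neq0$. The easy direction follows the pattern of Section \ref{Section:Discriminant}: if $0\neq a\in\text{Rad}(W^k)$ then $ab\in\text{Rad}(W^k)$ acts nilpotently on $\overline{M}$ for every $b$, whence $\Phi_{\overline{M}}(a,b)=\text{Tr}_{\overline{M}}(ab)=0$ and $\overline{G}$ is singular. The reverse direction carries the real content. Writing $[\overline{M}]=\sum_j e_j[S_j]$ with $S_j$ the irreducible $W^k$-modules, additivity of the trace form on composition factors shows that on the split semisimple algebra $W^k=\bigoplus_j M_{g_j}(k)$ the form $\Phi_{\overline{M}}$ is block diagonal, its $j$-th block being $e_j$ times the nondegenerate trace form of $M_{g_j}(k)$; hence $\det\overline{G}\neq0$ exactly when $p\nmid e_j$ for all $j$. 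The step I expect to be the main obstacle is to rule out $p\mid e_j$ when $W^k$ is semisimple: there the Cartan matrix is the identity, so the decomposition matrix $D=(d_{ij})$ satisfies $D^{t}D=I$, and over the nonnegative integers this forces each column of $D$ to be a distinct standard basis vector, giving $e_j=\sum_i d_{ij}=1$ and thus $p\nmid e_j$ automatically. Combining the two directions yields $W^k$ semisimple $\iff$ $\det\overline{G}\neq0$ $\iff$ $p\nmid\F(\CC)$, which is the claim; note that the two special cases of Proposition \ref{Proposition: Semisimlicity char0} are recovered as consistency checks, since $\F(\CC)$ divides $\prod_{R\in\R}\abs{R}$ and is divisible by $\prod_{X\in\text{Cel}(\CC)}\abs{X}^2$.
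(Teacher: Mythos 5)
You follow essentially the same route as the paper: pass to a splitting $p$-modular system, use Theorem \ref{Theorem:Hanaki2} to identify $D_{M,\set{A_i}}(W^{K})$ with $\pm\F(\CC)$ for $M=\bigoplus_i M_i$, reduce modulo the maximal ideal, and translate semisimplicity of $W^{k}$ into nondegeneracy of the reduced trace form $\Phi_{\overline{M}}$. Your reduction to splitting fields (via Lemma \ref{separable algebra}), the easy direction (a nonzero radical element degenerates the form), and the identity $\Phi_{\overline{M}}=\bigoplus_j e_j\Phi_{S_j}$ on $W^{k}\simeq\bigoplus_j M_{g_j}(k)$ are all correct and agree in substance with the paper.

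The gap is at the step you yourself single out as the main obstacle. You infer $D^{t}D=I$ from ``the Cartan matrix is the identity''; that inference is Brauer reciprocity $C=D^{t}D$, which is a theorem about \emph{symmetric} algebras and symmetric orders (e.g.\ group algebras), not about arbitrary orders. For a general $\RR$-order one has only $C=E^{t}D$, where $E$ records the characteristic-zero constituents of the lifted projective indecomposables, and $E=D$ requires a symmetrizing form on $W^{\RR}$ that is perfect over $\RR$. For the adjacency algebra the natural candidate is $\Phi_N\bigl(A(R),A(S)\bigr)=\delta_{S,R^t}\abs{R}$ (Theorem \ref{Theorem:Reza1}), whose Gram determinant $\pm\prod_{R\in\R}\abs{R}$ is a unit in $\RR$ only when $p\nmid\prod_{R\in\R}\abs{R}$, i.e.\ precisely in the case already settled by Proposition \ref{Proposition: Semisimlicity char0}(1); in general adjacency algebras need not be symmetric (the introduction recalls that Bannai and Ito ask exactly when they are symmetric or Frobenius). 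So in the case that carries the content of the theorem you have no right to $C=D^{t}D$. The conclusion you want ($e_j=1$ for all $j$ when $W^{k}$ is semisimple) is nevertheless true, and it is exactly what the paper's Proposition \ref{Proposition:Hanaki3} (Dade's lifting of central primitive idempotents) supplies: IRR$(W^{K})$ and IRR$(W^{F})$ correspond with equal dimensions, so the decomposition matrix is a permutation matrix. Alternatively you can avoid both reciprocity and Dade by a Brauer--Nesbitt count on the regular module: $[W^{F}]=\sum_i f_i[\overline{M_i}]$ in the Grothendieck group gives $g_j=\sum_i f_i d_{ij}$, and substituting $f_i=\sum_l d_{il}g_l$ yields $g=D^{t}Dg$ with $g$ a positive integer vector, which by positivity forces the columns of $D$ to be distinct standard basis vectors, hence $e_j=\sum_i d_{ij}=1$. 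With either repair your argument coincides with the paper's proof.
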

In order to prove this result, we use the following tools which is quite useful in  the study  of modular representation theory.\\

\noindent Let $p$ be a prime, and let $(K,\RR, F)$ be a $p$-modular system for $W$.
Namely, $\RR$ is a complete discrete valuation ring with the maximal
ideal $(\pi)$, $K$ is the quotient field of $\mathbf{R}$ and its
characteristic is 0, and $F$ is the residue field $R/(\pi)$ and
its characteristic is $p$. For more \text{det}ails about $p$-modular
systems, see \cite{Nagao}. The simplest example of\\ $p$-modular
systems is $(\mathbb{Q},\mathbb{Z}_{(p)},\mathbb{Z}_p)$, in which $\mathbb{Z}_{(p)}$ is the localization of $\Z$ at  prime ideal $p\Z$. To
simplify our argument, we suppose that
$W^{K}$ and $W^{F}$ are split algebras. In this case, we say
$(K,\RR,F)$ is a splitting $p$-modular system of $W$. For $x\in
\RR$, we denote the image of the natural homomorphism $R\To F$ by $x^{\ast}$.\\
Each idempotent of $W^{F}$ is the image of an idempotent of $W^{\RR}$
by the natural epimorphism from $W^{\RR}$ to $W^{F}\simeq
W^{\RR}/\pi W^{\RR}$. The primitivity of idempotents is preserved
by this correspondence (see \cite[Theorem I.14.2]{Nagao}).
Moreover, there exists natural correspondence between the set of
central primitive  idempotents of $W^{\RR}$ and that of $W^{F}$ (see \cite[Proposition 1.12]{Dade1973}). Namely, if $$1 = e_0 + e_1 +\dots+e_r$$
 is the central idempotent decomposition of $1$ in $W^{\RR}$, then so is
$$1^{\ast}= e_0^{\ast} + e_1^{\ast} +\dots+ e_r^{\ast}$$ and we have the following proposition.

\begin{prop}\cite[Hanaki]{Hanaki2000}\label{Proposition:Hanaki3} Suppose $W^F$ is semisimple. Then there exists a natural
correspondence between IRR$(W^K)$ and IRR$(W^F)$ which preserves dimensions. Namely, If $\set{e_0 , e_1 ,\dots,e_r}$ is the set of
central primitive  idempotents of $W^K$, then so  is $\set{e_0^{\ast},e_1^{\ast}, \dots,e_r^{\ast}}$ of that in $W^F$.
\end{prop}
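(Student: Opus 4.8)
The plan is to compare the two split semisimple algebras $W^{K}$ and $W^{F}$ across the $p$-modular system $(K,\RR,F)$, using reduction of $\RR$-lattices and the elementary shape of the decomposition matrix. Throughout write IRR$(W^{K})=\set{M_1,\dots,M_r}$ and IRR$(W^{F})=\set{S_1,\dots,S_t}$. Since both algebras are split semisimple, $W^{K}\simeq\bigoplus_{i=1}^{r}M_{f_i}(K)$ and $W^{F}\simeq\bigoplus_{j=1}^{t}M_{g_j}(F)$ with $f_i=\deg M_i$, $g_j=\deg S_j$, and in each algebra the central primitive idempotents are in bijection with the simple modules. Because $W^{\RR}$ is $\RR$-free of rank $\abs{\R}$, tensoring gives $\dim_K W^{K}=\dim_F W^{F}=\abs{\R}$, so $\sum_i f_i^2=\sum_j g_j^2=\abs{\R}$.

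First I would record the block picture already assembled before the statement. The central primitive idempotents $e_0,\dots,e_s$ of $W^{\RR}$ lie in $Z(W^{\RR})\subseteq Z(W^{K})$, and by the correspondence recalled above (see \cite{Dade1973}) their reductions $e_0^{\ast},\dots,e_s^{\ast}$ are exactly the central primitive idempotents of $W^{F}$; as $W^{F}$ is split semisimple, each block $e_i^{\ast}W^{F}$ is a single matrix algebra carrying one simple module, whence $s+1=t$. Each $e_i$ is a central idempotent of $W^{K}$, so it is a sum of some of the block identities of the $M_{f_\bullet}(K)$; thus every block carries at least one simple $K$-module and $r=\sum_i\#\set{M\in\mathrm{IRR}(W^{K}):\ e_iM\neq 0}\ge s+1=t$. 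The whole statement therefore reduces to the single equality $r=t$: once it holds, each block contains exactly one simple $K$-module, so the $e_i$ are already primitive in $Z(W^{K})$, i.e. the central primitive idempotents of $W^{K}$ coincide with $e_0,\dots,e_s\in W^{\RR}$ and reduce to those of $W^{F}$, which is precisely the asserted idempotent correspondence $e_i\mapsto e_i^{\ast}$.

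To obtain $r=t$ I would bring in the decomposition matrix. Choosing a full $W^{\RR}$-lattice $L_i$ in each $M_i$ and reducing modulo $\pi$ gives $\bar L_i$, whose composition multiplicities form the non-negative integer decomposition matrix $D=(d_{ij})$, and Brauer reciprocity in the split setting (see \cite{Nagao}) identifies the Cartan matrix of $W^{F}$ with $C=D^{t}D$. Since $W^{F}$ is semisimple, every projective indecomposable $W^{F}$-module is simple, so $C=I_t$ and hence $D^{t}D=I_t$. Thus the $t$ columns of $D$ are pairwise orthogonal non-negative integer vectors of squared length $1$, so each is a distinct standard basis vector. No row of $D$ can vanish, because $\bar L_i\neq 0$ forces a constituent in row $i$; since all nonzero entries are these $t$ units lying in $t$ distinct rows, every row must be one of them, forcing $r=t$ and $D$ a permutation matrix. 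After reindexing $D=I_r$, so $\bar L_i\cong S_i$ is simple and $f_i=\dim_K M_i=\dim_F S_i=g_i$: the correspondence preserves dimensions.

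Finally I would reconcile the two descriptions: the triviality of $D$ says each block of $W^{\RR}$ contains the single $K$-simple $M_i$ and the single $F$-simple $S_i$, so $e_iW^{\RR}$ is an $\RR$-order in $M_{f_i}(K)$ reducing to $e_i^{\ast}W^{F}\simeq M_{g_i}(F)$, and $f_i^2=\mathrm{rank}_\RR e_iW^{\RR}=g_i^2$ recovers dimension preservation while confirming that $M_i\leftrightarrow S_i$ is the natural bijection induced by $e_i\mapsto e_i^{\ast}$. The main obstacle is the modular-representation input underlying the crux $r=t$, namely the existence and good reduction of the lattices $L_i$ and the reciprocity $C=D^{t}D$; granting these standard facts (all in \cite{Nagao}), the purely combinatorial step $D^{t}D=I\Rightarrow D$ a permutation matrix, together with the idempotent correspondence and primitivity preservation recalled above, delivers both the bijection IRR$(W^{K})\leftrightarrow$ IRR$(W^{F})$ and the equality of dimensions.
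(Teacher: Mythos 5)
Your proposal is correct, but it is considerably more self-contained than the paper, which offers no proof of this proposition at all: it is quoted from Hanaki \cite{Hanaki2000}, and the text preceding it merely recalls the two ingredients you also start from, namely lifting of idempotents along $W^{\RR}\To W^{F}$ with preservation of primitivity \cite[Theorem I.14.2]{Nagao} and Dade's correspondence between central primitive idempotents of $W^{\RR}$ and those of $W^{F}$ \cite{Dade1973}. Where you go beyond the paper is precisely the crux $r=t$: you bring in the decomposition matrix $D$, invoke $C=D^{t}D$, use semisimplicity of $W^{F}$ to get $C=I$, and observe that a nonnegative integer matrix with orthonormal columns and no zero rows is a permutation matrix --- which simultaneously yields the bijection, the equality of degrees $f_i=g_i$, and the fact that the block idempotents of $W^{\RR}$ are already primitive in $Z(W^{K})$, so that the central primitive idempotents of $W^{K}$ actually lie in $W^{\RR}$ (a point the paper's statement silently needs for $e_i^{\ast}$ to make sense, and which you establish explicitly). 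One citation-level caveat: \cite{Nagao} and most textbooks prove $C=D^{t}D$ for group algebras or symmetric orders, whereas $W^{\RR}$ is merely an order in a split semisimple $K$-algebra; the identity does hold in this generality (lift each primitive idempotent $\bar e_j$ to $e_j\in W^{\RR}$, check $e_jW^{K}\simeq\bigoplus_i d_{ij}M_i$ as right modules since $\dim_K M_ie_j=d_{ij}$ in the split case, then apply Brauer--Nesbitt independence of the composition factors of $\overline{L}$ from the choice of full lattice $L$), but you should either include this argument or cite a source stated for orders rather than group algebras. Alternatively, reciprocity can be bypassed by a per-block dimension count closer in spirit to Hanaki's original proof: within a block $e_i$, every reduction $\bar L_j$ is a module over $e_i^{\ast}W^{F}\simeq M_{g_i}(F)$, hence $\bar L_j\simeq S_i^{d_j}$ with $f_j=d_jg_i$ and $d_j\geq 1$, and comparing $\mathrm{rank}_{\RR}\,e_iW^{\RR}=\sum_j f_j^2$ with $\dim_F e_i^{\ast}W^{F}=g_i^2$ forces $\sum_j d_j^2=1$, i.e.\ exactly one ordinary irreducible per block, of degree $g_i$. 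Both routes deliver the same statement; yours lets the combinatorics of $D^{t}D=I$ do the work, while the block count lets the algebra do it.
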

Let IRR$(W^{K})=\set{M_1,\cdots,M_r}$ and let $\mathfrak{X}_i$ be a matrix representation of $W^{K}$
corresponding to $M_i$. Put $f_i=dim M_i$. By \cite[Theorem II.1.6]{Nagao}, $M_i$ has an  $\RR$-form,
namely, we may assume that $\mathfrak{X}_i(A_j)\in M_{f_i}(\RR)$. Put $M=\bigoplus_{i=1}^{r}M_i$, and we can define an $W^\RR$-module $\widetilde{M}$
such that $K\otimes \widetilde{M}\simeq M$. Then we can define an $W^F$-module $M^{\ast}=\widetilde{M}/\pi M$. Obviously,
$\Bigl(D_{M,\set{A_i}}(W^{K})\Bigr)^\ast=D_{M^\ast,\{A_i^\ast\}}(W^F)$. By Proposition \ref{Proposition:Hanaki3}, if $W^F$ is semisimple, then
$\Bigl(D_{M,\set{A_i}}(W^{K})\Bigr)^\ast\neq0$. Also if $W^F$ is not semisimple, then $\Bigl(D_{M,\set{A_i}}(W^{K})\Bigr)^\ast=0$.

\noindent Therefor we can say that  $D_{M,\set{A_i}}(W^{K})$ characterizes the semisimplicity of $W^F$. Now by Theorem \ref{Theorem:Hanaki2}
$D_{M,\set{A_i}}(W^{K})=\pm\F(\CC)$ and thus Theorem \ref{Theorem:Reza2} holds for $F$. As the prime field of order $p$ is a perfect field, by Theorem \ref{Theorem:perfect}, Theorem \ref{Theorem:Reza2}
 holds for arbitrary field $k$ of characteristic $p$.\\

\noindent\textbf{Acknowledgements}\\
The author would like to thank  I. Ponomareno, M. Hirasaka, and  A. Hanaki
for giving the author their valuable and farsighted comments. The author would like to express his gratitude to his M.Sc. Supervisor
A. Rahnama, who encouraged him to study and do research in representation theory.

\end{document}